\theoremstyle{plain}
\newtheorem{theorem*}{Theorem}
\newtheorem*{lemma*}{Lemma}
\newtheorem{corollary*}{Corollary}
\newtheorem*{proposition*}{Proposition}
\newtheorem{conjecture*}{Conjecture}
\newtheorem{theorem}{Theorem}[section]
\newtheorem{lemma}[theorem]{Lemma}
\theoremstyle{remark}
\newtheorem*{claim}{Claim}
\theoremstyle{definition}
\newcounter{commentcounter}
\def\tautwo{\tau^{(2)}}
\def\nng{\NN(G)}
\def\gl{\mbox{GL}}
\def\nng{\NN(G)}
\def\eul{\operatorname{Eul}}
\def\gl{\mbox{GL}} \def\Q{\Bbb{Q}}
\def\HH{\mathcal{H}}
\def\op{\operatorname}
\def\det{\op{det}}
  \def\Z{\Bbb{Z}} \def\R{\Bbb{R}} \def\C{\Bbb{C}}
   \def\ll{\langle} \def\rr{\rangle}
  \def\g{\gamma}  \def\bp{\begin{pmatrix}}
\def\sm{\setminus} \def\ep{\end{pmatrix}} \def\bn{\begin{enumerate}} 
   \def\en{\end{enumerate}}
\def\ba{\begin{array}} \def\ea{\end{array}}  
  \def\s{\sigma}   \def\ti{\tilde}\def\wti{\widetilde}
\def\id{\mbox{id}}   
\def\be{\begin{equation}} \def\ee{\end{equation}} 
 \def\hom{\mbox{Hom}}
\def\co{\colon}
\def\ol{\overline}
\def \NN{\mathcal{N}}
\def\d{\dagger}
\newcommand{\version}[1] %marks the date of last editing and compilation
{\begin{center} Last edited on #1\\
    Last compiled on \today
  \end{center}
}
\begin{document}

\title{The $L^2$-Alexander torsion is symmetric}

\author{J\'er\^ome Dubois}
\address{Universit\'e Blaise Pascal - Laboratoire de Math\'ematiques UMR 6620 - CNRS\\
Campus des C\'ezeaux - B.P. 80026\\
63171 Aubi\`ere cedex\\
France}
  \email{jerome.dubois@math.univ-bpclermont.fr}
   
   \author{Stefan Friedl}
   \address{Fakult\"at f\"ur Mathematik\\ Universit\"at Regensburg\\   Germany}
   \email{sfriedl@gmail.com}
   
\author{Wolfgang L\"uck}
\address{Mathematisches Institut\\ Universit\"at Bonn\\
Endenicher Allee 60\\ 53115 Bonn\\ Germany}
\email{wolfgang.lueck@him.uni-bonn.de}

\date{\today}

\subjclass[2010]{Primary 57M27; Secondary 57Q10}

\keywords{$L^2$-Alexander torsion, duality, Thurston norm}

\begin{abstract}
We show that the $L^2$-Alexander torsion of a 3-manifold is symmetric. 
This can be viewed as a generalization of the symmetry of the Alexander polynomial of a knot.\end{abstract}

\maketitle

%=====================================
\section{Introduction}

An \emph{admissible triple} $(N,\phi,\g)$ consists of an irreducible, orientable, compact
3--manifold $N\ne S^1\times D^2$ with empty or toroidal boundary, a non-zero class $\phi
\in H^1(N;\Z)=\hom(\pi_1(N),\Z)$ and a homomorphism $\g\co \pi_1(N)\to G$ such that $\phi$
factors through $\g$.

In~\cite{DFL14a,DFL14b} we used the $L^2$--torsion (see e.g.~\cite{Lu02})
to associate to  an admissible triple $(N,\phi,\g)$ 
the $L^2$--Alexander torsion  $\tautwo(N,\phi,\g)$ which is  a function
\[
\tautwo(N,\phi,\g)\co \R_{>0}\to \R_{\geq 0}
\]
that is well-defined up to multiplication by a  function of the type $t\mapsto t^m$ for some $m\in \Z$.
We recall the definition in Section~\ref{section:def}.

The goal of this paper is to show that  the $L^2$-Alexander torsion is symmetric.
In order to state the symmetry result we need to recall that given a
3-manifold $N$ the \emph{Thurston norm}~\cite{Th86} of some $\phi\in
H^1(N;\Z)=\hom(\pi_1(N),\Z)$ is defined as
 \[
x_N(\phi):=\min \{ \chi_-(S)\, | \, S \subset N \mbox{ properly embedded surface dual to }\phi\}.
\]
Here, given a surface $S$ with connected components $S_1\cup\dots \cup S_k$, we define its
complexity as
\[
\chi_-(S)=\sum_{i=1}^k \max\{-\chi(S_i),0\}.
\]
Thurston~\cite{Th86} showed that $x_N$ is a, possibly degenerate, norm on $H^1(N;\Z)$. 
 We can now formulate the main result of this paper.

 \begin{theorem}\label{mainthm2}
   Let $(N,\phi,\g)$ be an admissible triple. Then  for any  representative  $\tau$ of $\tautwo(N,\phi,\g)$ 
  there exists  an $n\in \Z$ with
   $n\equiv x_N(\phi) \mbox{ mod } 2$ such that
   \[
   \tau(t^{-1})=t^n\cdot \tau(t)\mbox{ for any }t\in \R_{>0}.
   \]
 \end{theorem}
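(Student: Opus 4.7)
\emph{Proof proposal.}
My plan is to derive the symmetry from Poincar\'e--Lefschetz duality applied to the twisted $\ell^2$-chain complex of $\wti{N}$, combined with the standard duality theory for $L^2$-torsion. Fix a finite, free $\pi_1(N)$-equivariant CW structure on $\wti{N}$. Poincar\'e--Lefschetz duality for the compact $3$-manifold $N$ with toroidal boundary produces a $\Z\pi$-chain homotopy equivalence
\[
C_*(\wti{N}) \;\simeq\; \hom_{\Z\pi}\bigl(C_{3-*}(\wti{N},\partial\wti{N}),\Z\pi\bigr),
\]
where the right-hand side carries the $\Z\pi$-module structure induced by the standard involution $\ol{g}=g^{-1}$ on $\Z\pi$. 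Since $\partial N$ is a (possibly empty) disjoint union of tori, whose $L^2$-contributions vanish, the passage from the absolute to the relative complex costs nothing at the level of $L^2$-torsion.

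The crucial observation is how the twist behaves under duality. Apply the functor induced by $\g$ together with the one-parameter representation $\phi_t\co \pi_1(N)\to\R^\times$, $g\mapsto t^{\phi(g)}$. Because the involution on $\Z\pi$ sends $g$ to $g^{-1}$, it sends the scalar $t^{\phi(g)}$ to $t^{-\phi(g)}$; hence dualising interchanges the twists $\phi_t$ and $\phi_{t^{-1}}$. This is the mechanism that produces the $t\leftrightarrow t^{-1}$ swap in the functional equation. Invoking the general duality theorem for $L^2$-torsion (see~\cite{Lu02}), which compares the torsion of a Hilbert $\NN(G)$-chain complex with that of its dual up to an explicit correction, one arrives at an identity $\tau(t^{-1})=t^n\cdot\tau(t)$ for some integer $n$ depending only on the chosen CW structure. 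Note that $n$ modulo $2$ is already independent of the representative $\tau$, since replacing $\tau$ by $t^m\tau$ replaces $n$ by $n-2m$.

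The remaining task is to identify $n \bmod 2$ with $x_N(\phi) \bmod 2$. The integer $n$ can be written as an alternating sum of $\phi$-weights attached to the cells of the chosen CW structure, i.e.\ a $\phi$-weighted Euler characteristic. To match this with $x_N(\phi)$ modulo $2$, I plan to compute $n \bmod 2$ for a specific class of CW structures, for instance those arising from a handle decomposition adapted to a norm-minimising surface dual to $\phi$, and then to invoke the results of~\cite{DFL14b} relating the asymptotic slope of $\log\tau(t)$ to $x_N(\phi)$ as a consistency check.

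The main obstacle will be this last step. Poincar\'e duality together with the general duality theory for $L^2$-torsion yields the functional equation up to some integer exponent without great difficulty; pinning down this exponent modulo $2$, however, forces one to tie the combinatorial Euler-type quantity produced by the duality to the genuinely topological Thurston norm. This bridge between a formal chain-level computation and the geometric invariant $x_N(\phi)$ is the non-formal heart of the theorem.
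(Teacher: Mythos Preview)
Your outline for the functional equation is essentially correct and parallels the paper's argument: the paper also derives $\tau(t^{-1})=t^{n}\tau(t)$ from Poincar\'e--Lefschetz duality on a triangulation together with the observation that dualising replaces $\g_t$ by $(\g_t)^{\dagger}=\g_{t^{-1}}$, and it handles the passage between the absolute and relative torsion via the triviality of the boundary contribution. The paper, however, organises all of this through Turaev's Euler structures rather than through an unspecified chain homotopy equivalence. This is not a cosmetic difference: with an Euler structure $e\in\eul(N)$ the exponent becomes an \emph{explicit} homology class, namely $n=\phi(c_1(e))$ where $c_1(e)\in H_1(N;\Z)$ is Turaev's Chern class of $e$. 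Your version only produces ``some integer depending on the CW structure'', which is enough to state the functional equation but not enough to compute its parity.

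The genuine gap is the step you yourself flag as the main obstacle, the identification $n\equiv x_N(\phi)\bmod 2$. Your proposal there is not a proof: interpreting $n$ as a $\phi$-weighted alternating cell sum and then choosing a handle decomposition adapted to a norm-minimising surface might eventually be made to work, but nothing in what you wrote explains why that alternating sum should see $\chi_-(S)$ modulo $2$; and the appeal to the asymptotic-slope results of~\cite{DFL14b} is, as you say, only a consistency check (and those results were not available in the required generality). The paper closes this gap by a completely different route that depends on the Euler-structure formalism: once one knows $n=\phi(c_1(e))=c_1(e)\cdot S$ for a Thurston-norm-minimising surface $S$ dual to $\phi$ (arranged to have no disk components), Turaev's results \cite[Lemma~VI.1.2 and Section~XI.1]{Tu02} give $c_1(e)\cdot S\equiv b_0(\partial S)\bmod 2$, and since every component of $S$ satisfies $\chi\equiv b_0(\partial)\bmod 2$ one concludes $n\equiv \chi_-(S)=x_N(\phi)\bmod 2$. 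Without the Chern-class identification you have no handle on the parity, and I do not see a substitute in your sketch.
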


 It is worth looking at the case that $N=S^3\sm\nu K$ is the complement of a tubular
 neighborhood $\nu K$ of an oriented knot $K\subset S^3$.  We denote by $\phi_K\co
 \pi_1(S^3\sm \nu K)\to \Z$ the epimorphism sending the oriented meridian to 1. Let
 $\g\colon \pi_1(N)\to G$ be a homomorphism such that $\phi_K$ factors through $\g$. We
 define
\[
\tautwo(K,\g):=\tautwo(S^3\sm \nu K,\phi_K,\g).
\]
If we take $\g=\id$ to be the identity, then we showed  in~\cite{DFL14b} that 
\[
\tautwo(K,\id)= \Delta^{(2)}_K(t)\cdot \max\{1,t\},
\]
where $\Delta^{(2)}_K(t)\colon \R_{>0}\to \R_{\geq 0}$ denotes the $L^2$-Alexander
invariant which was first introduced by Li--Zhang~\cite{LZ06a,LZ06b,LZ08} and which was
also studied in~\cite{DW10,DW13,BA13a,BA13b}.

If we take $\g=\phi_K$, then we showed in~\cite{DFL14b} that the $L^2$-Alexander torsion
$\tautwo(K,\phi_K)$ is fully determined by the Alexander polynomial $\Delta_K(t)$ of $K$
and that in turn $\tautwo(K,\phi_K)$ almost determines the Alexander polynomial
$\Delta_K(t)$. In this sense the $L^2$-Alexander torsion can be viewed as a `twisted'
version of the Alexander polynomial, and at least morally it is related to the twisted
Alexander polynomial of Lin~\cite{Li01} and Wada~\cite{Wa94} and to the higher-order
Alexander polynomials of Cochran~\cite{Co04} and Harvey~\cite{Ha05}. We refer to~\cite{DFL14a} 
for more on the relationship and similarities between the various twisted
invariants.

If $K$ is a knot, then any Seifert surface is dual to $\phi_K$ and it immediately follows
that $x(\phi_K)\leq \max\{2\cdot\mbox{genus(K)}-1,0\}$. In fact an elementary argument
shows that for any \emph{non-trivial} knot we have the equality
$x(\phi_K)=2\cdot\mbox{genus(K)}-1$. It follows in particular that the Thurston
norm of $\phi_K$ is odd. We thus obtain the following corollary to Theorem~\ref{mainthm2}.

\begin{theorem}
  Let $K\subset S^3$ be an oriented non-trivial knot and let $\g\colon \pi_1(N)\to G$ be a
  homomorphism such that $\phi_K$ factors through $\g$. Then there exists an odd $n$ with
  \[
   \tautwo(K,\g)(t^{-1})=t^n\cdot \tautwo(K,\g)(t)\mbox{ for any }t\in \R_{>0}.
   \]
\end{theorem}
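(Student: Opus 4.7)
The plan is to deduce the corollary directly from Theorem~\ref{mainthm2} by verifying that the parity condition $n \equiv x_N(\phi_K) \mod 2$ forces $n$ to be odd whenever $K$ is non-trivial. First I would observe that $(S^3\sm\nu K,\phi_K,\g)$ is an admissible triple: the knot exterior $N=S^3\sm\nu K$ is irreducible (since $S^3$ is), orientable, compact with toroidal boundary, and is not $S^1\times D^2$ precisely because $K$ is non-trivial; by hypothesis $\phi_K$ factors through $\g$. So Theorem~\ref{mainthm2} applies and produces an integer $n$ with $n\equiv x_N(\phi_K) \mod 2$ satisfying the asserted functional equation.

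It therefore suffices to show that $x_N(\phi_K)$ is odd for a non-trivial knot $K$, as indicated in the paragraph preceding the statement. The strategy here is standard: any minimal complexity surface $S$ dual to $\phi_K$ may be assumed to have no sphere or disk components (these contribute $0$ to $\chi_-$ and, in the disk case, are forbidden because an embedded compressing disk for the meridian would imply $K$ is the unknot). After discarding closed tori and higher genus closed components, which only increase $\chi_-$, we are left with a (possibly disconnected) Seifert-type surface $S$ whose boundary consists of longitudes on $\partial N$, with total algebraic intersection one with the meridian. Restricting to the component carrying $\phi_K$ and further minimizing shows that $x_N(\phi_K)=2\cdot\mbox{genus}(K)-1$, where genus denotes the Seifert genus; since $K$ is non-trivial the genus is at least $1$, and in any case the right-hand side is odd.

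Combining these two steps yields that the $n$ produced by Theorem~\ref{mainthm2} satisfies $n\equiv 1\mod 2$, which is the statement to be proved. No genuine obstacle is expected: the main input is Theorem~\ref{mainthm2}, and the remaining work is the elementary parity computation for the Thurston norm of a knot meridian, which is well-known but which I would spell out briefly for the reader's convenience.
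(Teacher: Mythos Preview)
Your proposal is correct and follows essentially the same approach as the paper: the paper derives this as an immediate corollary of Theorem~\ref{mainthm2} together with the observation (stated in the paragraph preceding the theorem) that $x_N(\phi_K)=2\cdot\mathrm{genus}(K)-1$ is odd for a non-trivial knot. Your write-up simply spells out the admissibility check and the elementary Thurston norm computation in slightly more detail than the paper does.
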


The proof of Theorem~\ref{mainthm2} has many similarities with the
proof of the main theorem in~\cite{FKK12} which in turn builds on the ideas of 
Turaev~\cite{Tu86,Tu90,Tu01,Tu02}. In an attempt to keep the proof as short as possible we will
on several occasions refer to~\cite{FKK12} and~\cite{Tu90} for definitions and results.

Our symmetry theorem can be viewed as a variation on the theme that the ordinary Alexander
polynomials of knots and links are symmetric, and that the twisted Reidemeister torsions
corresponding to unitary representations are symmetric. We refer 
to~\cite{Ki96,FKK12,HSW10} for proofs of such symmetry results.  \medskip

The paper is structured as follows. In Section~\ref{sec:Euler_structures} we recall the
notion of an Euler structure which is due to Turaev. We proceed in 
Section~\ref{section:l2torsion} with the definition of the $L^2$-torsion of a manifold
corresponding to a representation over a group ring.  We prove our main technical duality
theorem, which holds for manifolds of any dimension, in Section~\ref{sec:duality_for_torsion}.  
In Section~\ref{sec:twisted_torsion_of_3-manifolds} we relate the
$L^2$-torsion of a 3-manifold to a relative $L^2$-torsion. Finally, in Section~\ref{section:proofmainthm} 
we introduce the $L^2$-Alexander torsion of an admissible
triple and we prove Theorem~\ref{mainthm2}.

\subsection*{Conventions.} All manifolds are assumed to be smooth, connected, orientable
and compact.  All CW-complexes are assumed to be finite and connected.  If $G$ is a group
then we equip $\C[G]$ with the usual involution given by complex conjugation and by
$\ol{g}:=g^{-1}$ for $g\in G$.  We extend this involution to matrices over $\C[G]$ by
applying the involution to each entry.

Given a ring $R$ we will view all modules as left $R$-modules, unless we say explicitly
otherwise.  Furthermore, given a matrix $A \in M_{m,n}(R)$, by a slight abuse of notation,
we denote by $A \colon R^m \to R^n$ the $R$-homomorphism of left $R$-modules obtained by
right multiplication with $A$ and thinking of elements in $R^m$ as the only row in a
$(1,m)$-matrix.

\subsection*{Acknowledgments.}
 The second author gratefully acknowledges the
support provided by the SFB 1085 `Higher Invariants' at the University of Regensburg,
funded by the Deutsche Forschungsgemeinschaft DFG.
The paper is also financially supported by the Leibniz-Preis of the third author granted by the
 {DFG}.

%=====================================
\section{Euler structures}
\label{sec:Euler_structures}

In this section we recall the notion of an Euler structure of a pair of CW-complexes and
manifolds which is due to Turaev.  We refer to~\cite{Tu90,Tu01,FKK12} for full details.
Throughout this paper, given a space $X$, we denote by $\HH_1(X)$ the first integral
homology group viewed as a multiplicative group.

%=====================================
\subsection{Euler structures on CW-complexes}

Let $X$ be a finite CW-complex of dimension $m$ and let $Y$ be a proper subcomplex.
We denote by $p \colon \wti{X}\to X$ the universal covering of $X$ and we write
$\wti{Y}:=p^{-1}(Y)$.  An \emph{Euler lift $c$} is a set of cells in $\wti{X}$ such that
each cell of $X\sm Y$ is covered by precisely one of the cells in the Euler lift.

 Using the canonical left action of $\pi=\pi_1(X)$ on $\wti{X}$ we obtain a free and
 transitive action of $\pi$ on the set of cells of $\wti{X}\sm \wti{Y}$ lying over a fixed
 cell in $X\sm Y$.  If $c$ and $c'$ are two Euler lifts, then we can order the cells such
 that $c=\{c_{ij}\}$ and $c'=\{c_{ij}'\}$ and such that for each $i$ and $j$ the cells
 $c_{ij}$ and $c_{ij}'$ lie over the same $i$-cell in $X\sm Y$.  In particular there exist
 unique $g_{ij}\in \pi$ such that $c_{ij}'=g_{ij}\cdot c_{ij}$.  We now write
 $\HH=\HH_1(X)$ and we denote the projection map $\pi\to \HH$ by $\Psi$.  We define
\[
c'/c:= \prod\limits_{i=0}^m\prod\limits_{j}\Psi(g_{ij})^{(-1)^i}\in  \HH.
\]
We say that $c$ and $c'$ are \emph{equivalent} if $c'/c\in \HH$ is trivial.
An equivalence class of Euler lifts will be referred to as an \emph{Euler structure}.
We  denote by $\eul(X,Y)$ the set of Euler structures.
If $Y=\emptyset$ then we will also write $\eul(X)=\eul(X,Y)$.

Given $g\in \HH$ and $e\in \eul(X,Y)$ we define $g\cdot e\in \eul(X,Y)$ as follows: pick a
representative $c$ for $e$ and pick $\wti{g}\in \pi_1(X)$ which represents $g$, then act
on one $i$-cell of $c$ by $ g^{(-1)^i}$.  The resulting Euler lift represents an element
in $\eul(X,Y)$ which is independent of the choice of the cell. We denote by $ g\cdot e$
the Euler structure represented by this new Euler lift.  This defines a free and
transitive $\HH$-action on $\eul(X,Y)$, with $( g\cdot e)/e= g$.
 
If $(X',Y')$ is a cellular subdivision of $(X,Y)$, then there exists a canonical
$\HH_1(X)$-equivariant bijection
\[
\s \colon \eul(X,Y)\to \eul(X',Y')
\] 
which is defined as follows: Let $e\in \eul(X,Y)$
and pick an Euler lift for $(X,Y)$ which represents $e$. There exists a unique Euler lift
for $(X',Y')$ such that the cells in the Euler lift of $(X',Y')$ are contained in the
cells of the Euler lift of $(X,Y)$. We then denote by $\s(e)$ the Euler structure
represented by this Euler lift.  This map agrees with the map defined by Turaev~\cite[Section~1.2]{Tu90}.

%=====================================
\subsection{Euler structures of smooth manifolds}

Now we will quickly recall the definition of Euler structures on smooth manifolds.  Let
$N$ be a manifold and let $\partial_0N \subset \partial N$ be a union of components of
$\partial N$ such that $\chi(N,\partial_0 N)=0$.  We write $\HH=\HH_1(N)$.  A
\emph{triangulation} of $N$ is a pair $(X,t)$ where $X$ is a simplicial complex and
$t\colon |X|\to N$ is a homeomorphism. Note that $t^{-1}(\partial_0 N)$ is a simplicial
subspace of $X$.  Throughout this section we write $Y:=t^{-1}(\partial_0 N)$.  For the
most part we will suppress $t$ from the notation.  Following~\cite[Section~I.4.1]{Tu90} we
consider the projective system of sets $\{\eul(X,Y)\}_{(X,t)}$ where $(X,t)$ runs over all
$C^1$-triangulations of $N$ and where the maps are the $\HH$-equivariant bijections
between these sets induced either by $C^1$-subdivisions or by smooth isotopies in $N$.

Now we define $\eul(N,\partial_0 N)$ by identifying the sets $\{\eul(X,Y)\}_{(X,t)}$ via
these bijections. We refer to $\eul(N,\partial_0 N)$ as the set of Euler structures on
$(N,\partial_0 N)$.  Note that for a $C^1$-triangulation $X$ of $N$ we get a canonical
$\HH$-equivariant bijection $\eul(X,Y)\to \eul(N,\partial_0 N)$.

%%======================================
%\subsection{Smooth Euler structures}\label{section:smootheuler}
%Let $N$ be a compact manifold and let $\partial_0 N$ be a union of components of $\partial
%N$ such that $\chi(N,\partial_0 N)=0$.  Following Turaev (see~\cite[Section~5.1]{Tu90}) we
%define a \emph{regular vector field} on $(N,\partial_0 N)$ to be a nowhere vanishing
%vector field on $N$ which points inwards on $\partial_0 N$ and outwards on $\partial
%N\sm \partial_0 N$.  Two such vector fields are called \emph{homologous} if for some point
%$x\in \operatorname{Int}(N)$ the restrictions of the vector fields to $N\sm x$ are
%homotopic in the set of all regular vector fields on $N\sm x$.  The set of homology
%classes of regular vector fields is called $\vect(N,\partial_0 N)$.  By Turaev~\cite{Tu90}
%the set $\vect(N,\partial_0 N)$ admits a canonical, free and transitive action by
%$\HH_1(N)$, and there exists a canonical $\HH_1(N)$-equivariant bijection
%\[
%\ca_N\colon \eul(N,\partial_0 N)\to \vect(N,\partial_0 N).
%\]

% ==============================================================================
\section{The $L^2$-torsion of a manifold}\label{section:l2torsion}

% ==============================================================================
\subsection{The Fuglede-Kadison determinant and the $L^2$-torsion of a chain complex}

Before we start with the definition of the $L^2$-Alexander torsion we need to recall some
key properties of the Fuglede-Kadison determinant and the definition of the $L^2$-torsion
of a chain complex of free based left $\C[G]$-modules. Throughout the section we refer 
to~\cite{Lu02} and to~\cite{DFL14b} for details and proofs.

We fix a group $G$.  Let $A$ be a $k\times l$-matrix over $\C[G]$.  Then there exists the
notion of $A$ being of `determinant class'.  (To be slightly more precise, we view the
$k\times l$-matrix $A$ as a homomorphism $\NN(G)^l\to \NN(G)^k$, where $\NN(G)$ is the von
Neumann algebra of $G$, and then there is the notion of being of `determinant class'.)  We
treat this entirely as a black box, but we note that if $G$ is residually amenable, e.g., a
3-manifold group~\cite{He87} or solvable, then by~\cite{Lu94,Sc01,Cl99,ES05} any matrix
over $\Q[G]$ is of determinant class.   If the matrix $A$ is not of determinant class then
for the purpose of this paper we define $\det_{\nng}(A)=0$. On the other hand, if $A$ is
of determinant class, then we define
\[
\det_{\nng}(A):=\mbox{Fuglede-Kadison determinant of $A$}\in \R_{>0}.
\] 
Note that we do
not assume that $A$ is a square matrix.  We will not provide a definition of the
Fuglede-Kadison determinant but we summarize a few key properties in the following theorem
which is basically a consequence of~\cite[Example~3.12]{Lu02} and~\cite[Theorem~3.14]{Lu02}.

\begin{theorem}\label{thm:propfk}
  \bn
\item[$(1)$] If $A$ is a square matrix with complex entries such that the usual
  determinant $\det(A)\in \C$ is non-zero, then $\det_{\nng}(A)=|\det(A)|$.
  % \item[$(2)$] If $A$ is of determinant class, then $\det_{\nng}(A)>0$.
\item[$(2)$] The Fuglede-Kadison determinant does not change if we swap two rows or two
  columns.
\item[$(3)$] Right multiplication of a column by $\pm g$ with $g\in G$ does not change the
  Fuglede--Kadison determinant.
\item[$(4)$] For any matrix $A$ over $\C[G]$ we have
  \[
\det_{\nng}(A)=\det_{\nng}(\ol{A}^t).
\] 
\en
\end{theorem}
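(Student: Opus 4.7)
The plan is to deduce each of the four assertions from two foundational facts about the Fuglede--Kadison determinant collected in Chapter~3 of \cite{Lu02}: Example~3.12, which identifies $\det_{\nng}$ with the modulus of the ordinary determinant on scalar matrices, and Theorem~3.14, which records its functoriality, in particular its invariance under left and right multiplication by unitaries over $\C[G]$ and its compatibility with the adjoint. The determinant-class convention $\det_{\nng}(A):=0$ must be propagated through each manipulation, so at each step I will also note that the operations preserve determinant-class status and thus respect both sides of the stated identities.

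For (1), I would observe that a $k\times k$ matrix $A$ with entries in $\C\cdot 1_G\subset \C[G]$ acts on $\ell^2(G)^k$ as $A\otimes \id_{\ell^2(G)}$, so the spectral density function of $A^*A$ is a finite sum of step functions at the positive eigenvalues of the ordinary matrix $A^*A$. Under the hypothesis $\det(A)\ne 0$ these eigenvalues are all positive, determinant-class is automatic, and unpacking the definition recovers $\det_{\nng}(A)=\prod_i \sqrt{\lambda_i}=|\det(A)|$. This is exactly the content of Example~3.12.

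For (2) and (3), the point is that each operation is implemented by multiplication against a unitary matrix over $\C[G]$. Swapping two rows (resp.\ columns) is left (resp.\ right) multiplication by an integer permutation matrix $P$, which is unitary since $\overline{P}^t=P^{-1}$. Right-multiplying a single column by $\pm g$ with $g\in G$ is right multiplication by the diagonal matrix with entry $\pm g$ in one slot and $1$ elsewhere; this matrix is unitary because $(\pm g)\cdot\overline{(\pm g)}=(\pm g)(\pm g^{-1})=1$ in $\C[G]$. Invariance of $\det_{\nng}$ under such left/right unitary multiplication, from Theorem~3.14 of \cite{Lu02}, delivers (2) and (3) directly, and the same invariance ensures that determinant-class is preserved.

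For (4), I would invoke the adjoint identity $\det_{\nng}(A)=\det_{\nng}(A^*)$ from Theorem~3.14 together with the observation that in the left $\C[G]$-module conventions fixed in the paper, the $\ell^2$-adjoint of the operator induced by an $(m,n)$-matrix $A$ is exactly $\overline{A}^t$ (entrywise conjugation followed by transposition), because the involution on $\C[G]$ is given by $\overline{g}=g^{-1}$. The only real obstacle here is tracking carefully that our row-vector/right-multiplication convention lines up with the column-vector/left-multiplication convention of \cite{Lu02}, so that the symbol ``$A^*$'' in the reference corresponds to $\overline{A}^t$ in our notation rather than to its transpose -- once this bookkeeping is done, (4) is immediate and the theorem is complete.
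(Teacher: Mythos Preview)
Your proposal is correct and aligns with the paper's own treatment: the paper does not give a standalone proof but simply states that the theorem ``is basically a consequence of~\cite[Example~3.12]{Lu02} and~\cite[Theorem~3.14]{Lu02}.'' You invoke precisely these two references and, going slightly beyond the paper, spell out how each item follows (scalar case from Example~3.12; row/column operations via unitary invariance; the adjoint identity for $\overline{A}^t$), including the determinant-class bookkeeping---so your argument is exactly the intended one, just made explicit.
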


Note that (2) implies that when we study determinants of homomorphisms we can work with
unordered bases.

Now let
\[
C_*=\left( \,\,0\to C_l\xrightarrow{\partial_l} C_{l-1}\xrightarrow{\partial_{l-1}}
  \dots C_1 \xrightarrow{\partial_1} C_0\to 0\right)
\] 
be a chain complex of free left $\C[G]$-modules. We can then consider the corresponding $L^2$-Betti numbers
$b_i^{(2)}(C_*)\in \R_{\geq 0}$, as defined in~\cite{Lu02}. 

Now suppose that the chain complex is equipped with bases $B_i\subset C_i$, $i=0,\dots,l$.
If at least one of the $L^2$-Betti numbers $b_i^{(2)}(C_*)$ is non-zero or if at least one the boundary
maps is not of determinant class, then we define the $L^2$-torsion
$\tautwo(C_*,B_*):=0$. Otherwise we define the $L^2$-torsion of the based chain complex
$C_*$ to be
\[
\tautwo(C_*,B_*):=\prod_{i=1}^l \det_{\nng}(A_i)^{(-1)^i}\in \R_{>0}
\] 
where the $A_i$
denote the boundary matrices corresponding to the given bases.  Note that this definition
is the multiplicative inverse of the exponential of the $L^2$-torsion as defined in~\cite[Definition~3.29]{Lu02}.

%==============================================================================
\subsection{The  twisted $L^2$-torsion of a pair of CW-complexes}
Let $(X,Y)$ be a pair of finite CW-complexes and let $e\in \eul(X,Y)$.  We denote by
$p\colon \wti{X}\to X$ the universal covering of $X$ and we write
$\wti{Y}:=p^{-1}(Y)$. Note that the deck transformation turns $C_*(\wti{X},\wti{Y})$
naturally into a chain complex of left $\Z[\pi_1(X)]$-modules.

Now let $G$ be a group and let $\varphi\colon \pi(X)\to \gl(d,\C[G])$ be a
representation. We view elements of $\C[G]^d$ as row vectors. Right multiplication via
$\varphi(g)$ thus turns $\C[G]^d$ into a right $\Z[\pi_1(X)]$-module.  We then consider
the chain complex
\[
C_*^{\varphi}(X,Y;\C[G]^d):=\C[G]^d\otimes_{\Z[\pi_1(X)]}C_*(\wti{X},\wti{Y})
\] 
of left $\C[G]$-modules.

Now let $e\in \eul(X,Y)$.  We pick an Euler lift $\{{c}_{ij}\}$ which represents $e$.
Throughout this paper we denote by $v_1,\dots,v_d$ the standard basis for $\C[G]^d$.  We
equip the chain complex $ C_*^{\varphi}(X,Y;\C[G]^d)$ with the basis provided by the
$v_k\otimes {c}_{ij}$. Therefore we can define
\[
\tautwo(X,Y,\varphi,e):=\tautwo\left(C_*^{\varphi}(X,Y;\C[G]^d),\{v_k\otimes c_{ij}\}\right)\in \R_{\geq 0}.
\]

We summarize a few properties of the $L^2$-torsion in the following lemma.

\begin{lemma}\label{lem:proptautwo}
  \bn
\item[$(1)$] The number $\tautwo(X,Y,\varphi,e)$ is well--defined, i.e., independent of the
  choice of the Euler lift which represents $e$.
\item[$(2)$] If $g\in \HH_1(X)$, then
  \[
\tautwo(X,Y,\varphi,ge)=\det_{\nng}(\varphi(g^{-1}))\cdot \tautwo(X,Y,\varphi,e).
\]
\item[$(3)$] If $\delta\colon \pi_1(X)\to \gl(d,\C[G])$ is conjugate to $\varphi$, i.e., if
  there exists an $A\in \gl(d,\C[G])$ such that $\delta(g)=A\varphi(g)A^{-1}$ for all
  $g\in \pi_1(X)$, then
  \[\tautwo(X,Y,\delta,e)=\tautwo(X,Y,\varphi,e).\]
\item[$(4)$] If $(X',Y')$ is a cellular subdivision of $(X,Y)$ and if $e'\in \eul(X',Y')$
  is the Euler structure corresponding to $e$, then
  \[
\tautwo(X',Y',\varphi,e')=\tautwo(X,Y,\varphi,e).
\] 
\en
\end{lemma}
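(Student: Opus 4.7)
My plan is to verify each of the four assertions by basis-change arguments, using only the multiplicativity of the Fuglede-Kadison determinant together with the properties recorded in Theorem~\ref{thm:propfk}.

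For $(1)$ and $(2)$, the key input is the standard transformation rule for $\tautwo$ under change of basis: replacing the given basis of $C_i^{\varphi}$ by another basis related by an invertible $P_i$ over $\C[G]$ modifies $\tautwo$ by a signed power of $\det_{\nng}(P_i)$. Concretely, for $(1)$, two Euler lifts representing the same $e$ are related by elements $g_{ij}\in\pi_1(X)$ whose image in $\HH_1(X)$ satisfies $\prod_{i,j}\Psi(g_{ij})^{(-1)^i}=1$. The identity $v_k\otimes g_{ij}c_{ij}=v_k\varphi(g_{ij})\otimes c_{ij}$ shows that the change of basis on each $C_i^{\varphi}$ is block-diagonal with blocks $\varphi(g_{ij})$, and the resulting ratio of torsions becomes a signed product of the determinants $\det_{\nng}(\varphi(g_{ij}))$. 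Since $g\mapsto\det_{\nng}(\varphi(g))$ is a homomorphism $\pi_1(X)\to\R_{>0}$ into an abelian group, it factors through $\HH_1(X)$, so the hypothesis forces this product to be $1$. Case $(2)$ is then an immediate specialization: acting by $g^{(-1)^i}$ on a single $i$-cell alters only one block of the basis of $C_i^{\varphi}$, and after collecting signs the net factor is $\det_{\nng}(\varphi(g^{-1}))$.

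For $(3)$ I would construct the $\C[G]$-linear chain isomorphism $f_*\colon C_*^{\varphi}(X,Y;\C[G]^d)\to C_*^{\delta}(X,Y;\C[G]^d)$ defined on generators by $v\otimes c\mapsto vA^{-1}\otimes c$; the relation $\delta(g)=A\varphi(g)A^{-1}$ is exactly what is needed to descend $f_*$ to the tensor product over $\Z[\pi_1(X)]$. Under $f_*$ each boundary matrix of $C_*^{\delta}$ is obtained from that of $C_*^{\varphi}$ by conjugation with block-diagonal matrices assembled from $A$ and $A^{-1}$, and the resulting Fuglede-Kadison determinants agree after the alternating product is collected, yielding equality of torsions. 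For $(4)$ it suffices, by iteration, to treat a single elementary subdivision. The Euler structure $\s(e)$ is characterized precisely so that the cells of a representing Euler lift of $(X',Y')$ lie inside those of the given Euler lift of $(X,Y)$; the passage from $C_*^{\varphi}(X,Y;\C[G]^d)$ to $C_*^{\varphi}(X',Y';\C[G]^d)$ then amounts to direct-summing with a based, contractible, acyclic chain complex of standard form whose $L^2$-torsion equals $1$, and multiplicativity of $\tautwo$ in short exact sequences of based complexes gives the claim.

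The technically most delicate step is $(3)$: although each boundary matrix of $C_*^{\delta}$ is a conjugate of the corresponding boundary matrix of $C_*^{\varphi}$, the conjugators $I_{n_i}\otimes A$ and $I_{n_{i-1}}\otimes A^{-1}$ act on source and target in different sizes, so one cannot simply invoke conjugation invariance of $\det_{\nng}$ term by term. The cancellation of factors $\det_{\nng}(A)^{\pm n_i}$ must instead be tracked across the whole alternating product defining $\tautwo$, so that the contributions from adjacent boundary matrices telescope away rather than leaving a residual Euler-characteristic term.
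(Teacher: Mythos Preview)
The paper does not actually prove this lemma; it simply declares that the arguments are completely analogous to those for ordinary Reidemeister torsion and defers to \cite{Tu86,Tu01,FKK12}. Your sketch is exactly the standard argument those references carry out, transported to the $L^2$-setting via Theorem~\ref{thm:propfk} and the multiplicativity of the Fuglede--Kadison determinant, so it is in line with what the paper intends.

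One small clarification on your concern in~(3): the possible residual Euler-characteristic factor $\det_{\nng}(A)^{d\cdot\chi(X,Y)}$ is in fact harmless. If some $L^2$-Betti number of $C_*^{\varphi}$ is nonzero or some boundary map fails to be of determinant class, then the same holds for $C_*^{\delta}$ via your chain isomorphism $f_*$ (noting that $A\in\gl(d,\C[G])$ is invertible, hence of determinant class), so both torsions are $0$ by definition. Otherwise all $L^2$-Betti numbers vanish, and since the alternating sum of von Neumann dimensions of the $C_i^{\varphi}$ equals $d\cdot\chi(X,Y)$, this forces $\chi(X,Y)=0$ and the telescoping is exact.
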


The proofs are completely analogous to the proofs for ordinary Reidemeister torsion as
given in~\cite{Tu86,Tu01,FKK12}.  In the interest of time and space we will therefore not
provide the proofs.

%==============================================================================
\subsection{The  $L^2$-Alexander torsion for manifolds}

Let $N$ be a manifold and let $\partial_0N \subset \partial N$ be a union of components of
$\partial N$.  Let $G$ be a group and let
$\varphi\colon \pi(N)\to \gl(d,\C[G])$ be a representation. Finally let $e\in
\eul(N,\partial_0 N)$.

Recall that for any $C^1$-triangulation $f\colon X \to N$ we get a canonical bijection
$\eul(X,Y)\xrightarrow{f_*} \eul(N,\partial_0 N)$.  Now we define
\[
\tautwo(N,\partial_0N,\varphi,e):=\tautwo(X,Y,\varphi\circ f_*,f_*^{-1}(e)).
\]
By Lemma~\ref{lem:proptautwo} (4) and the discussion in~\cite{Tu90} the invariant
$\tautwo(N,\partial_0N,\varphi,e)\in \R_{\geq 0}$ is well-defined, i.e., independent of the
choice of the triangulation.

%=========================================
\section{Duality for torsion of manifolds equipped with Euler structures}
\label{sec:duality_for_torsion}

%=====================================
\subsection{The algebraic duality theorem for $L^2$-torsion}

Let $G$ be a group and let $V$ be a right $\C[G]$-module. We denote by $\ol{V}$ the left
$\C[G]$-module with the same underlying abelian group together with the module structure
given by $v\cdot_{\ol{V}} p:=\ol{p}\cdot_{V} v$ for any $p\in \C[G]$ and $v\in V$.  If $V$
is a left $\C[G]$-module then we can consider $\hom_{\C[G]}(V,\C[G])$ the set of all left
$\C[G]$-module homomorphisms.  Note that the fact that the range $\C[G]$ is a
$\C[G]$-bimodule implies that $\hom_{\C[G]}(V,\C[G])$ is naturally a right $\C[G]$-module.

In the following let $C_*$ be a chain complex of length $m$ of left $\C[G]$-modules
with boundary operators $\partial_i$.
Suppose that $C_*$ is equipped with a  basis $B_i$ for each $C_i$.
We denote by $C^\#$ the \emph{dual chain complex}
whose  chain groups
are  the $\C[G]$-left modules $C^\#_i:=\ol{\hom_{\C[G]}(C_{m-i},\C[G])}$ and
%$\partial_i:C_{i+1}^\#\to C_i^\#$
where the boundary map $\partial^\#_i:C_{i+1}^\#\to C_i^\#$
is given by $(-1)^{m-i}\partial_{m-i-1}^*$. This means that for any 
$c\in C_{m-i}$ and $d\in C^\#_{i+1}$ we have $\partial^\#_i(d)(c) = (-1)^{m-i}d(\partial_{m-i-1}(c))$.
We denote by $B_*^\#$  the bases of $C^\#$ dual to the bases $B_*$.
We have the following lemma.

\begin{lemma}\label{lem:dual}
If $\tautwo(C_*,B_*)=0$, then $\tautwo(C_*^\#,B_*^\#)=0$, otherwise we have
\[\tautwo(C_*,B_*) =\tautwo(C_*^\#,B_*^\#)^{(-1)^{m+1}}.\]
\end{lemma}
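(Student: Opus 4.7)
The plan is to identify the dual chain complex explicitly in matrix terms, then apply Theorem~\ref{thm:propfk}(4), which says $\det_{\nng}(A)=\det_{\nng}(\ol{A}^t)$, together with a reindexing argument. First I would unwind the definitions: let $A_i$ denote the matrix of $\partial_i\co C_i\to C_{i-1}$ in the basis $B_*$, and let $A^\#_j$ denote the matrix of $\partial^\#_{j-1}\co C^\#_j\to C^\#_{j-1}$ in the dual basis $B^\#_*$. Since $C^\#_j = \ol{\hom_{\C[G]}(C_{m-j},\C[G])}$, a direct computation using the dual basis, together with the overline (which intertwines the left and right $\C[G]$-actions via $\bar g=g^{-1}$), shows that up to the global sign $(-1)^{m-j+1}$ coming from the definition of $\partial^\#$, the matrix $A^\#_j$ is exactly $\ol{A_{m-j+1}}^t$. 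Theorem~\ref{thm:propfk}(3) ensures that the extra sign does not affect the Fuglede--Kadison determinant, and Theorem~\ref{thm:propfk}(4) then gives
\[
\det_{\nng}(A^\#_j)=\det_{\nng}(\ol{A_{m-j+1}}^t)=\det_{\nng}(A_{m-j+1}).
\]

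Next I would handle the vanishing case. The matrix $A$ is of determinant class if and only if $\ol{A}^t$ is (this is built into the definition via spectral density functions, and is symmetric under the involution), so the set of indices at which the boundary map fails to be of determinant class in $C_*$ is in bijection with the corresponding set for $C^\#_*$ under $i\leftrightarrow m-i+1$. Similarly, the $L^2$-Betti numbers satisfy $b^{(2)}_i(C^\#_*)=b^{(2)}_{m-i}(C_*)$ by the standard algebraic duality for $L^2$-cohomology. Together these show that $\tautwo(C_*,B_*)=0$ if and only if $\tautwo(C^\#_*,B^\#_*)=0$.

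Assuming neither torsion vanishes, I would finish with the bookkeeping of exponents. By definition,
\[
\tautwo(C^\#_*,B^\#_*)=\prod_{j=1}^m \det_{\nng}(A^\#_j)^{(-1)^j}=\prod_{j=1}^m \det_{\nng}(A_{m-j+1})^{(-1)^j}.
\]
Substituting $k=m-j+1$ so that $(-1)^j=(-1)^{m-k+1}=(-1)^{m+1}(-1)^{k}$, this becomes
\[
\prod_{k=1}^m \det_{\nng}(A_k)^{(-1)^{m+1}(-1)^k}=\tautwo(C_*,B_*)^{(-1)^{m+1}},
\]
which is equivalent to the claimed identity since $((-1)^{m+1})^2=1$.

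The main obstacle I expect is Step~1: verifying carefully that in the dual basis, once the overline construction has converted the naturally right $\C[G]$-module $\hom_{\C[G]}(C_{m-j},\C[G])$ into a left module, the matrix of the dual boundary really is $\ol{A_{m-j+1}}^t$ (and not, for instance, $A_{m-j+1}^t$ or its inverse conjugate transpose). Getting this convention correct is what makes Theorem~\ref{thm:propfk}(4) applicable, and everything else is reindexing and invocation of the black-box properties of $\det_{\nng}$.
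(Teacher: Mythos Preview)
Your argument is correct and follows essentially the same route as the paper's: identify the boundary matrices of $C_*^\#$ as $\pm\ol{A_{m-j+1}}^t$, invoke the duality of $L^2$-Betti numbers for the vanishing case, and then use the properties of the Fuglede--Kadison determinant together with the reindexing $k=m-j+1$. The paper's proof is terser (it leaves the reindexing implicit and cites only one part of Theorem~\ref{thm:propfk}), but you have spelled out exactly the details it omits.
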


\begin{proof}
  We first note that by the proof of~\cite[Theorem~1.35 (3)]{Lu02} the $L^2$-Betti numbers
  of $C_*$ vanish if and only if the $L^2$-Betti numbers of $C_*^\#$ vanish.  If either does not vanish, then it thus follows that the other does not vanish, and both torsions  are zero by
  definition.

  We now suppose that the $L^2$-Betti numbers of $C_*$ vanish.  We denote by $A_i$ the
  matrices of the boundary maps of $C_*$ with respect to the given basis. It follows
  easily from the definitions that the boundary matrices of the chain complex $C_*^\#$
  with respect to the basis $B_*^\#$ are given by $(-1)^{m-i}\ol{A}_i^\d$.  The lemma is
  now an immediate consequence of the definitions and of Theorem~\ref{thm:propfk} (2).
\end{proof}

%=====================================
\subsection{The duality theorem for manifolds}

Before we state our main technical duality theorem we need to introduce two more
definitions.  \bn
\item Let $G$ be a group and let $\varphi\colon \pi\to \gl(d,\C[G])$ be a
  representation. We denote by $\varphi^\d$ the representation which is given by $g\mapsto
  \ol{\varphi(g^{-1})}^t$.
\item Let $N$ be an $m$-manifold and let $e\in \eul(N,\partial N)$.
 We pick a triangulation $X$ for $N$. We denote by $Y$ the subcomplex corresponding to $\partial N$.  Let $X^\d$ be the CW-complex which is given by the
  cellular decomposition of $N$ dual to $X$. We pick an Euler lift $\{c_{ij}\}$ which represents
  $e\in \eul(X,Y)=\eul(N,\partial N)$.  For any $i$-cell $c$ in $\wti{X}$ we denote by $c^\d$ the
  unique oriented $(m-i)$-cell in $\wti{X^\d}$ which has intersection number $+1$ with
  $c_{ij}$.  The  Euler lift $\{c^\d_{ij}\}$ defines an element in $\eul(X^\d)=\eul(N)$ that we denote by $e^\d$.
   We refer to~\cite[Section~1.4]{Tu01} 
  and~\cite[Section~4]{FKK12} for details. 
  \en

In this section we will prove the following duality theorem.

\begin{theorem}\label{thm:dualitygeneral}
  Let $N$ be an $m$-manifold.  Let $G$ be a group and
  let $\varphi\colon \pi(N)\to \gl(d,\C[G])$ be a representation. Let $e\in
  \eul(N,\partial N)$. Then either both $\tautwo(N,\partial N,\varphi,e)$ and
  $\tautwo(N,\varphi^\d,e^\d)$ are zero,
     or the following equality holds:
  \[ 
  \tautwo(N,\partial N,\varphi,e)={\tautwo(N,\varphi^\d,e^\d)^{(-1)^{m+1}}}.
\]
\end{theorem}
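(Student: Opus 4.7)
The plan is to reduce the theorem to the algebraic duality result of Lemma~\ref{lem:dual} via the classical Poincar\'e--Lefschetz chain-level duality between a smooth triangulation of $N$ and its dual cell decomposition. The overall strategy mirrors the argument in~\cite[Section~4]{FKK12} for ordinary Reidemeister torsion; the new work consists in verifying that the passage from $\varphi$ to $\varphi^\d$ arises naturally from dualising chain complexes, and that the Euler structures $e$ and $e^\d$ are matched correctly by the construction preceding the theorem statement.

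First I would pick a smooth triangulation $(X,t)$ of $N$ with $Y := t^{-1}(\partial N)$ a subcomplex, and let $X^\d$ be the dual cell decomposition. Fix an Euler lift $\{c_{ij}\}$ representing $e$, so that the associated family $\{c^\d_{ij}\}$ represents $e^\d$. Writing $\pi = \pi_1(N)$, classical Poincar\'e--Lefschetz duality provides, for each $i$, an isomorphism of free left $\Z[\pi]$-modules
\[
C_i(\wti{X},\wti{Y}) \;\xrightarrow{\cong}\; \ol{\hom_{\Z[\pi]}\!\bigl(C_{m-i}(\wti{X^\d}),\Z[\pi]\bigr)},
\]
which sends the basis element $c_{ij}$ to the $\Z[\pi]$-linear form dual to $c^\d_{ij}$ under the equivariant intersection pairing. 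Here the overline is needed so that the natural right $\Z[\pi]$-structure on $\hom$ becomes the left module structure matching that on $C_i(\wti{X},\wti{Y})$. A sign check identifies the boundary maps under this isomorphism with $(-1)^{m-i}\partial^*_{m-i-1}$, which is the sign appearing in the definition of the dual chain complex of Lemma~\ref{lem:dual}.

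Next I would tensor on the left with the right $\Z[\pi]$-module $\C[G]^d$ via $\varphi$ and apply the standard adjunction
\[
\C[G]^d \otimes_{\Z[\pi]} \hom_{\Z[\pi]}(C,\Z[\pi]) \;\cong\; \hom_{\C[G]}\!\bigl(\C[G]^d \otimes_{\Z[\pi]} C,\,\C[G]\bigr)
\]
for finitely generated free $C$, while tracking the action of $\pi$ through the involution $\ol{\,\cdot\,}$. This should yield an isomorphism of based chain complexes of left $\C[G]$-modules
\[
C_*^\varphi(X,Y;\C[G]^d) \;\cong\; \bigl(C_*^{\varphi^\d}(X^\d;\C[G]^d)\bigr)^\#,
\]
with the basis $\{v_k\otimes c_{ij}\}$ on the left sent to the basis dual to $\{v_k\otimes c^\d_{ij}\}$ on the right. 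The main obstacle I expect is precisely this compatibility check: one must verify that transporting $\varphi$ through $\hom(-,\C[G])$ and through the bar construction produces exactly the representation $\varphi^\d(g) = \ol{\varphi(g^{-1})}^t$, which amounts to careful bookkeeping with the definitions of dual modules and conjugate structures.

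Once this identification is in place, applying Lemma~\ref{lem:dual} to $C_* = C_*^\varphi(X,Y;\C[G]^d)$ of length $m$ yields that either both torsions vanish or
\[
\tautwo(X,Y,\varphi,e) \;=\; \tautwo(X^\d,\varphi^\d,e^\d)^{(-1)^{m+1}}.
\]
Finally, the triangulation invariance of Lemma~\ref{lem:proptautwo}(4), together with the definitions of $\eul(N,\partial N)$ and $\eul(N)$ via triangulations and the compatibility of the comparison maps under $C^1$-subdivisions discussed in~\cite[Section~I.4.1]{Tu90}, allows me to replace the left and right sides by $\tautwo(N,\partial N,\varphi,e)$ and $\tautwo(N,\varphi^\d,e^\d)^{(-1)^{m+1}}$, respectively, completing the proof.
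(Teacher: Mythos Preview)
Your proposal is correct and follows essentially the same route as the paper's proof: both reduce to Lemma~\ref{lem:dual} by identifying $C_*^\varphi(X,Y;\C[G]^d)$ with $\bigl(C_*^{\varphi^\d}(X^\d;\C[G]^d)\bigr)^\#$ via the equivariant intersection pairing between a triangulation and its dual cell decomposition, carrying the Euler lift $\{c_{ij}\}$ to the dual of $\{c_{ij}^\d\}$. The paper spells out explicitly the sesquilinear pairing $(a,b)\mapsto \sum_{g\in\pi}(a\cdot gb)g^{-1}$ and the formula $v\otimes f \mapsto \bigl(w\otimes\sigma \mapsto v\,\varphi(\ol{f(\sigma)})\,\ol{w}^t\bigr)$ realizing the step you call the ``standard adjunction,'' which is exactly the bookkeeping you flag as the main obstacle.
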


\begin{proof}
 As above we pick a triangulation $X$ for $N$ and  we denote by $Y$ the subcomplex corresponding to $\partial N$.  Let $X^\d$ be the CW-complex which is given by the
  cellular decomposition of $N$ dual to $X$. In the following we make the identification
  $\pi=\pi_1(X)=\pi_1(N)=\pi_1(X^\d)$.

  For the remainder of this section we pick an Euler lift $\{c_{ij}\}$ which represents
  $e\in \eul(N,\partial N)=\eul(X,Y)$.  We denote by $c_{ij}^\d$ the corresponding dual cells.   Theorem~\ref{thm:dualitygeneral}  follows immediately from the definitions
  and the following claim.

  \begin{claim}
    Either both $\tautwo (C_*^\varphi(X,Y;\C[G]^d),\{v_k\otimes c_{ij}\})$ and
    $\tautwo(C_*^{\varphi^\d}(X^\d;\C[G]^d),\{v_k\otimes c_{ij}^\d\})$ are zero, or the
    following equality holds:
    \[
    \tautwo \big(C_*^\varphi(X,Y;\C[G]^d),\{v_k\otimes c_{ij}\}\big)
    ={\tautwo\big(C_*^{\varphi^\d}(X^\d;\C[G]^d),\{v_k\otimes  c_{ij}^\d\}\big)^{(-1)^{m+1}}}.
\]
  \end{claim}

In order to prove the claim we first note that  there is a unique, non-singular intersection $\Z$-linear pairing
\[
C_{m-i}(\wti{X},\wti{Y})  \times  C_{i}(\wti{X^\d})\to \Z
\]
with the property that $a\cdot b^\d=\delta_{ab}$ for any cell $a$ of $\wti{X}\sm \wti{Y}$
and any cell $b$ of $\wti{X}$. We then consider the following pairing:
\[
\ba{ccl} C_{m-i}(\wti{X},\wti{Y}) \times C_{i}(\wti{X^\d})    &\to &\Z[\pi]\\
(a,b) &\mapsto & \ll a,b\rr :=\sum_{g\in \pi} (a\cdot gb)g^{-1}.\ea 
\]
Note that this pairing is sesquilinear in the sense that for any $a\in
C_{m-i}(\wti{X},\wti{Y})$, $b\in C_{i}(\wti{X^\d})$ and $p,q\in \Z[\pi]$ we have $\ll
pa,qb\rr =q\ll a,b\rr \ol{p}$. It is furthermore straightforward to see that the pairing
is non-singular.  This pairing has the property (see e.g.~\cite[Claim~14.4]{Tu01}) that
the following diagram commutes:
\[
\ba{cclcl} C_{i+1}(\wti{X},\wti{Y}) &\times &C_{m-i-1}(\wti{X^\d})&\to &\Z[\pi]\\[2mm]
\downarrow \partial_i&&\hspace{0.7cm} \uparrow(-1)^{i+1}\partial_{m-i-1}&&\downarrow = \\[2mm]
 C_{i}(\wti{X},\wti{Y})   &\times & C_{m-i}(\wti{X^\d}) & \to &\Z[\pi].\ea 
\]
Put differently, the maps
\[
\ba{rcl} C_i(\wti{X},\wti{Y})&\to&
\ol{\hom_{\Z[\pi]}(C_{m-i}(\wti{X^\d}),\Z[\pi])}\\
a&\mapsto & (b\mapsto \ll a,b\rr)\ea 
\]
define an isomorphism of based chain complexes of right $\Z[\pi]$-modules.  In fact it
follows easily from the definitions that the maps define an isomorphism
\[
 (C_*(\wti{X},\wti{Y}),\{c_{ij}\})\to (\ol{\hom_{\Z[\pi]}(C_{m-*}(\wti{X^\d}),\Z[\pi])},\{(c_{ij}^\d)^*\})
\]
of based chain complexes of left $\Z[\pi]$-modules. 
Tensoring these chain complexes with
$\C[G]^d$ we obtain an isomorphism
\begin{multline*}
\left(\C[G]^d\otimes_{\Z[\pi]} C_*(\wti{X},\wti{Y}),\{v_k\otimes c_{ij}\}\right)
\\
\to \left(\C[G]^d\otimes_{\Z[\pi]} \ol{\hom_{\Z[\pi]}( C_{m-*}(\wti{X^\d})^{*},\Z[\pi])},\{(c_{ij}^\d)^*\otimes v_k\}\right)
\end{multline*}
of based chain complexes of $\C[G]$-modules. Furthermore the maps
\[
\ba{rcl} \C[G]^d\otimes_{\Z[\pi]}\ol{ \hom_{\Z[\pi]}(C_i(\wti{X^\d}),\Z[\pi])} &\to&  
\ol{\hom_{\C[G]}\left(C_i^{\varphi^\d}(X^\d;\C[G]^d),\C[G]\right)}\\
v\otimes f&\mapsto & \left(\ba{rcl}
 C_i^{\varphi^\d}(X^\d;\C[G]^d)&\to &\C[G]
 \\ w\otimes \s &\mapsto &v \varphi\big(\ol{f(\s)}\big)\ol{w}^t \ea\right)\ea 
\]
induce
 an isomorphism
\[
\left(C_*^\varphi(X,Y;\C[G]^d),\{v_k\otimes c_{ij}\}\right)\to
\left(C_{*}^{\varphi^\d}(X^\d;\C[G]^d)^\#,\{(v_k\otimes c_{ij}^\d)^\#\}\right)
\]
of based chain complexes of left $\C[G]$-modules.
The claim is  now  an immediate consequence of Lemma~\ref{lem:dual}.
\end{proof}

%=====================================
\section{Twisted $L^2$-torsion of 3-manifolds}
\label{sec:twisted_torsion_of_3-manifolds}

Now we are heading towards the proof of Theorem~\ref{mainthm2}.
Therefore we are turning towards the study of $L^2$-torsions of 3-manifolds.  In order to
turn Theorem~\ref{thm:dualitygeneral} into the desired symmetry result we will need to
relate the $L^2$-torsions of a 3-manifold $N$ and the relative $L^2$-torsions of the pair
$(N,\partial N)$.  Henceforth we will restrict ourselves to one-dimensional
representations since these are precisely the ones which we will need in the proof of
Theorem~\ref{mainthm2}.

%=====================================
\subsection{Canonical  structures on tori}

Let $T$ be a torus.  We equip $T$ with a CW-structure with one 0-cell $p$, two 1-cells $x$
and $y$ and one 2-cell $s$.  We write $\pi=\pi_1(T,p)$ and by a slight abuse of notation
we denote by $x$ and $y$ the elements in $\pi$ represented by $x$ and $y$.  We denote by
$\wti{T}$ the universal cover of $T$. Then there exist lifts of the cells such that the
chain complex $C_*(\wti{T})$ of left $\Z[\pi]$-modules with respect to the bases given by
these lifts is of the form \be \label{equ:toruscc} 0\to \Z[\pi]\xrightarrow{\bp y-1&
  1-x\ep} \Z[\pi]^2\xrightarrow{\bp 1-x\\ 1-y\ep}\Z[\pi]\to 0.\ee
   We refer to the corresponding Euler structure of
$T$ as the \emph{canonical Euler structure on $T$}.  This definition is identical to the
definition provided by Turaev~\cite[p.~10]{Tu02}.

Given a group $G$ we say that a representation $\varphi\co \pi\to \gl(1,\C[G])$ is
\emph{monomial} if for any $x\in \pi$ we have $\varphi(x)=zg$ for some $z\in \C$ and $g\in G$. We
now have the following lemma.

\begin{lemma}\label{lem:torus1}
  Let $T$ be the torus and let $\varphi\colon \pi(T)\to \gl(1,\C[G])$ be a monomial
  representation such that $b_*^{(2)}(T;\C[G]^d)=0$. Let $e$ be the canonical Euler
  structure on $T$. Then
  \[
  \tautwo(T,\varphi,e)=1.
  \]
\end{lemma}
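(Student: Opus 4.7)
The plan is to compute $\tautwo(T,\varphi,e)$ directly from the chain complex~(\ref{equ:toruscc}). Applying $\varphi$ entry-wise and using the basis coming from the canonical Euler structure, the boundary matrices of $C_*^\varphi(T;\C[G])$ become
\[
\partial_1 = \bp 1-\varphi(x)\\ 1-\varphi(y)\ep \quad\text{and}\quad \partial_2 = \bp \varphi(y)-1 & 1-\varphi(x)\ep.
\]
Since by hypothesis all $L^2$-Betti numbers vanish, the definition gives $\tautwo(T,\varphi,e) = \det_{\nng}(\partial_2)/\det_{\nng}(\partial_1)$, so it suffices to prove $\det_{\nng}(\partial_1) = \det_{\nng}(\partial_2)$. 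First I would apply Theorem~\ref{thm:propfk}(3) to multiply the first column of $\partial_2$ by $-1$ and Theorem~\ref{thm:propfk}(2) to swap the two columns; this replaces $\partial_2$ by the row $\bp 1-\varphi(x) & 1-\varphi(y) \ep$, i.e.\ the same entries as $\partial_1$ but arranged in a row instead of a column.

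Next I invoke the standard identity $\det_{\nng}(B)^2 = \det_{\nng}(B^*B) = \det_{\nng}(BB^*)$ (a consequence of multiplicativity of the Fuglede--Kadison determinant together with Theorem~\ref{thm:propfk}(4)), choosing the factor yielding a $1\times 1$ matrix in each case: for $\partial_1$ this gives the scalar $(1-\ol{\varphi(x)})(1-\varphi(x)) + (1-\ol{\varphi(y)})(1-\varphi(y)) \in \C[G]$, and for the simplified $\partial_2$ it gives $(1-\varphi(x))(1-\ol{\varphi(x)}) + (1-\varphi(y))(1-\ol{\varphi(y)})$. Expanding and cancelling the common linear terms $1,\varphi(x),\ol{\varphi(x)},\varphi(y),\ol{\varphi(y)}$, the desired equality of these two scalars reduces to
\[
\ol{\varphi(x)}\varphi(x) + \ol{\varphi(y)}\varphi(y) \;=\; \varphi(x)\ol{\varphi(x)} + \varphi(y)\ol{\varphi(y)},
\]
that is, to the commutation $\varphi(g)\ol{\varphi(g)} = \ol{\varphi(g)}\varphi(g)$ for $g \in \{x,y\}$. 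This is precisely the point at which the monomial hypothesis enters: writing $\varphi(g) = z_g\, g_G$ with $z_g \in \C$ and $g_G \in G$, one computes $\varphi(g)\ol{\varphi(g)} = z_g g_G \bar z_g g_G^{-1} = |z_g|^2 = \ol{\varphi(g)}\varphi(g)$, since complex scalars lie in the center of $\C[G]$.

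The main obstacle is exactly this commutativity step: for a general representation $\varphi \colon \pi_1(T) \to \gl(1,\C[G])$ the elements $\varphi(g)$ and $\ol{\varphi(g)}$ need not commute in $\C[G]$, and without monomiality there is no reason for the two scalars above, and hence for $\det_{\nng}(\partial_1)$ and $\det_{\nng}(\partial_2)$, to coincide. Once the commutativity is granted by the monomial hypothesis the rest is pure bookkeeping with the properties listed in Theorem~\ref{thm:propfk}, and we conclude $\tautwo(T,\varphi,e) = 1$.
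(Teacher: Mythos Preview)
Your argument is correct and more self-contained than the paper's own treatment, which simply refers to the computation in~\cite{DFL14b}. The reduction to $\det_{\nng}(\partial_1)=\det_{\nng}(\partial_2)$, the column operations on $\partial_2$, and the use of the monomial hypothesis to obtain $\varphi(g)\ol{\varphi(g)}=|z_g|^2=\ol{\varphi(g)}\varphi(g)$ are all fine, and the latter is indeed precisely where monomiality is needed.

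Two small remarks. First, the identity $\det_{\nng}(B)^2=\det_{\nng}(B^*B)=\det_{\nng}(BB^*)$ does not follow from naive multiplicativity together with Theorem~\ref{thm:propfk}(4): multiplicativity in~\cite[Theorem~3.14]{Lu02} requires one factor to be injective and the other to have dense image, which fails here since one of $B^*B$, $BB^*$ has nontrivial kernel. The identity you use is nonetheless true and is recorded directly as~\cite[Lemma~3.15(3)--(4)]{Lu02}; just cite that instead. Second, your argument shows $\det_{\nng}(\partial_1)=\det_{\nng}(\partial_2)$, hence $\tautwo=1$ \emph{provided} both boundary maps are of determinant class; if not, the paper's convention gives $\tautwo=0$. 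You do not address this, but neither does the statement of the lemma nor the paper's one-line proof, so this is not a defect of your write-up relative to the source.
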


\begin{proof}
  In~\cite{DFL14b} we used the canonical Euler structure (even though we did not call it
  that way) to compute $\tautwo(T,\varphi)=1$.
\end{proof}

%=====================================
\subsection{Chern classes on 3-manifolds with toroidal boundary}
Let $N$ be a compact, orientable 3-manifold with toroidal incompressible boundary and let $e\in \eul(N,\partial N)$.

Let $X$ be a triangulation for $N$.  We denote the subcomplexes corresponding to the
boundary components of $N$ by $S_1\cup \dots\cup S_b$.  We denote by $p \colon \wti{X}\to
X$ and $p_i \colon \wti{S_i}\to S_i, i=1,\dots,b$ the universal covering maps of $X$ and
$S_i,i=1,\dots,b$. For each $i$ we identify a component of $p^{-1}(S_i)$ with $\wti{S_i}$

 We pick an Euler lift $c$ which represents $e$. For each boundary
torus $S_i$ we pick an Euler lift $\wti{s_i}$ to $\wti{S_i}\subset p^{-1}(S_i)\subset \wti{X}$ which represents the canonical Euler structure.
  The set of cells
$\{\ti{s}_1,\dots,\ti{s}_b,c\}$ defines an Euler structure $K(e)$ for $N$, which only
depends on $e$.  Put differently, we just defined a map $K\colon \eul(N,\partial N)\to
\eul(N)$ which is easily seen to be $\HH_1(N)$-equivariant.

Given $e\in \eul(N)$ there exists a unique element $g\in \HH_1(N)$ such that $e=g\cdot
K(e^\d)$.  Following Turaev~\cite[p.~11]{Tu02} we define $c_1(e):=g\in H_1(N;\Z)$ and we
refer to $c_1(e)$ as the \emph{Chern class of $e$}.
% Note that
%\comments{I added this. It's admittedly slightly awkward and incorrect mix of multiplicative and additive notation.}
%\be \label{equ:c1square} c_1(ge)=2g+c_1(e) \mbox{ for any $g\in \HH_1(N)$.}\ee

%=====================================
\subsection{Torsions of 3-manifolds}\label{section:3mfdtorsions}

Let $\pi$ and $G$ be groups and let $\varphi\colon \pi\to \gl(1,\C[G])$ be a monomial
representation.  It follows from the multiplicativity of the Fuglede-Kadison determinant,
see~\cite[Theorem~3.14]{Lu02}, that given $g\in \pi$ the invariant
$\det_{\nng}(\varphi(g))$ only depends on the homology class of $g$.  Put differently,
$\det_{\nng}\circ \varphi\co \pi\to\R_{\geq 0}$ descends to a map $ \det_{\nng}\circ
\varphi\colon H_1(\pi;\Z)\to \R_{\geq 0}$. We can now formulate the following theorem.

\begin{theorem}\label{thm:torsionboundary}
  Let $N$ be a 3-manifold which is either closed or which has toroidal and incompressible
  boundary.  Let $G$ be a group and let $\varphi\colon \pi(N)\to \gl(1,\C[G])$ be a
  monomial representation. Suppose that $b_*^{(2)}(\partial N;\C[G])=0$. Then for any
  $e\in \eul(N,\partial N)$ we have
  \[
  \tautwo(N,\partial N,\varphi,e^{\d})= \det_{\nng}(\varphi(c_1(e)))\cdot
  \tautwo(N,\varphi,e).
  \]
\end{theorem}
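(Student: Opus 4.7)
The plan is to compare the absolute torsion $\tautwo(N,\varphi,e)$ with the relative torsion $\tautwo(N,\partial N,\varphi,e^\d)$ by applying multiplicativity of $L^2$-torsion to the short exact sequence of chain complexes of the pair $(N,\partial N)$, and to identify the boundary contribution as $1$ via the canonical Euler structures on the boundary tori together with Lemma~\ref{lem:torus1}.

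More precisely, fix a triangulation $X$ of $N$ and write $Y$ for the subcomplex corresponding to $\partial N$. Choose an Euler lift $\{c_{ij}\}$ representing $e^\d\in\eul(X,Y)$ and, for each boundary torus $S_i$, an Euler lift $\{\tilde s_{i,jk}\}$ representing the canonical Euler structure on $S_i$. By the definition of the map $K$, the union of all these cells is an Euler lift of $X$ representing $K(e^\d)\in\eul(N)$. With respect to these compatible bases, the short exact sequence of based chain complexes of $\C[G]$-modules
\[
0\to C_*^{\varphi}(Y;\C[G])\to C_*^{\varphi}(X;\C[G])\to C_*^{\varphi}(X,Y;\C[G])\to 0
\]
is in the setting required for multiplicativity. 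If $b_*^{(2)}(N;\C[G])\ne 0$, the long exact sequence of $L^2$-homology together with the hypothesis $b_*^{(2)}(\partial N;\C[G])=0$ forces $b_*^{(2)}(N,\partial N;\C[G])\ne 0$, and both sides of the asserted equality vanish. Otherwise all three complexes are $L^2$-acyclic, and the multiplicativity of $L^2$-torsion for based short exact sequences of $L^2$-acyclic $\C[G]$-chain complexes (see~\cite[Theorem~3.35]{Lu02}) gives
\[
\tautwo(N,\varphi,K(e^\d)) \,=\, \tautwo(\partial N,\varphi,\mbox{canonical})\cdot \tautwo(N,\partial N,\varphi,e^\d).
\]
Because $\varphi$ restricts on each $\pi_1(S_i)$ to a monomial representation and $b_*^{(2)}(S_i;\C[G])=0$ by assumption (via Shapiro's lemma relating the Betti numbers over $\pi_1(N)$ and over $\pi_1(S_i)$), Lemma~\ref{lem:torus1} applied componentwise shows that the boundary factor equals $1$, hence $\tautwo(N,\varphi,K(e^\d))=\tautwo(N,\partial N,\varphi,e^\d)$.

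To finish, the definition of the Chern class gives $e=c_1(e)\cdot K(e^\d)$ in $\eul(N)$, so Lemma~\ref{lem:proptautwo}(2) yields
\[
\tautwo(N,\varphi,e) \,=\, \det_{\nng}\bigl(\varphi(c_1(e)^{-1})\bigr)\cdot \tautwo(N,\varphi,K(e^\d)).
\]
Combining the two displayed identities and using the multiplicative property of the Fuglede--Kadison determinant, namely $\det_{\nng}(\varphi(c_1(e)^{-1}))=\det_{\nng}(\varphi(c_1(e)))^{-1}$, produces the desired equality. I expect the main technical obstacle to be the justification of the multiplicativity formula for $L^2$-torsion under the short exact sequence above: one has to verify that no correction term arises from the associated long exact sequence of $L^2$-homology, and the hypothesis $b_*^{(2)}(\partial N;\C[G])=0$ is precisely what ensures this, while simultaneously making Lemma~\ref{lem:torus1} applicable componentwise to each boundary torus.
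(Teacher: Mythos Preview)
Your proof is correct and follows essentially the same route as the paper's own argument: both use the short exact sequence of the pair $(X,Y)$ together with multiplicativity of $L^2$-torsion~\cite[Theorem~3.35]{Lu02}, apply Lemma~\ref{lem:torus1} to kill the boundary contribution, and then pass from $K(e^\d)$ to $e$ via the Chern class and Lemma~\ref{lem:proptautwo}(2). Your remark about Shapiro's lemma makes explicit a point the paper leaves implicit (namely that the restriction of $\varphi$ to each $\pi_1(S_i)$ is again a monomial representation with vanishing $L^2$-Betti numbers, using incompressibility), but otherwise the arguments are the same.
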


\begin{proof}
The assumption that  $b_*^{(2)}(\partial N;\C[G])=0$
together with the proof of~\cite[Theorem~1.35~(2)]{Lu02} implies that 
$b_*^{(2)}(N;\C[G])=0$ if and only if 
$b_*^{(2)}(N,\partial N;\C[G])=0$. 
If both are non-zero, then both torsions $\tautwo(N,\partial N,\varphi,e^{\d}) $ and $ \tautwo(N,\varphi,e)$ are zero. 
For the remainder of this proof we now assume that 
$b_*^{(2)}(N;\C[G])=0$.

We pick a triangulation $X$ for $N$. As usual we denote by $Y$ the subcomplex
corresponding to $\partial N$.  Let $e\in \eul(N,\partial N)=\eul(X,Y)$. We pick an Euler
lift $c_*$ which represents $e^\d$.  We denote the components of $Y$ by $Y_1\cup\dots \cup
Y_b$ and we pick $\ti{s}^1_*,\dots, \ti{s}^b_*$ as in the previous section.
We write
$\ti{s}_*=\ti{s}^1_*\cup \dots \cup \ti{s}^b_*$.  We denote by $\{\ti{s}_*\cup c_* \}$ the
resulting Euler lift for $X$. Recall that this Euler lift represents $K(e)$.  We have the
following claim.

\begin{claim}
\[
\tautwo\big(C_*^\varphi(X,Y;\C[G]),\{ c_*\}\big)=  \tautwo\big(C_*^\varphi(X;\C[G]),\{\ti{s}_*\cup c_*\}\big).
\]
\end{claim}

In order to prove the claim we consider the following  short exact sequence of chain complexes
\[
0\to \bigoplus_{i=1}^b C_*^\varphi(Y_i;\C[G])\to C_*^\varphi(X;\C[G])\to C_*^\varphi(X,Y;\C[G])\to 0,
\]
with the  bases
\[
\{ s^{i}_* \}_{i=1,\dots,b},\, \{ \ti{s}_*\cup c_* \} \mbox{ and } \{c_* \}.
\]
Note that these bases are in fact compatible, in the sense that the middle basis is the
image of the left basis together with a lift of the right basis. By Lemma~\ref{lem:torus1}
we have $\tautwo(C_*^\varphi(Y_i;\C[G]),\{\wti{s}_*^i\})=1$ for $i=1,\dots,b$.  Now it follows
from the multiplicativity of torsion, see~\cite[Theorem~3.35]{Lu02}, that
\[
\tautwo\big(C_*^\varphi(X,Y;\C[G]),\{c_* \}\big)=  \tautwo\big(C_*^\varphi(X;\C[G]),\{c_*\cup \ti{s}_* \}\big).
\]
Here we used that the complexes are acyclic.
This concludes the proof of the claim.

Finally it follows from this claim, the  definitions
and Lemma~\ref{lem:proptautwo}
that
\[ 
\ba{rcl}
\tautwo(N,\partial N,\varphi,e^\d)&=&\tautwo\big(C_*^\varphi(X,Y;\C[G]),\{c_* \}\big)\\[2mm]
&=&  \tautwo\big(C_*^\varphi(X;\C[G]), \{\ti{s}_*\cup c_*\} \big)\\
&=&\tautwo(N,\varphi,K(e^\d))\\
&=&\tautwo(N,\varphi,c_1(e)^{-1}e)=\det_{\nng}(\varphi(c_1(e)))\cdot \tautwo(N,\varphi,e).\ea 
\]
\end{proof}

%=====================================
\section{The symmetry of the $L^2$-Alexander torsion}\label{section:proofmainthm}

%==============================================================================
\subsection{The  $L^2$-Alexander torsion for $3$-manifolds}
\label{section:def}

Let $(N,\phi,\g\co \pi_1(N)\to G)$ be an admissible triple  and let $e\in \eul(N)$. 
Given  $t\in \R_{>0}$
we   consider the  representation
\[
\ba{rcl} \g_t\colon \pi_1(N)&\to & \gl(1,\C[G]) \\
g&\mapsto & ( t^{\phi(g)}\g(g)).\ea 
\]
Then we  denote by $\tautwo(N,\phi,\g,e)$  the  function 
\[
\ba{rcl} \tautwo(N,\phi,\g,e)\colon \R_{>0}&\to& \R_{\geq 0}\\
t&\mapsto &\tautwo(N,\g_t,e).\ea
\]
Note that for a different Euler class $e'$ we have $e'=ge$ for some $g\in \HH_1(N)$ and 
it follows from Lemma~\ref{lem:proptautwo} that
 \[
\tautwo(N,\phi,\g,ge)(t)=t^{-\phi(g)}\tautwo(N,\phi,g,e)(t) \mbox{ for all }t\in \R_{>0}.
\]
Put differently, the functions $\tautwo(N,\phi,\g,e)$ and $\tautwo(N,\phi,\g,ge)$ are equivalent.
We denote by $\tautwo(N,\phi,\g)$ the equivalence class of the functions  $\tautwo(N,\phi,\g,e)$
and we refer to  $\tautwo(N,\phi,\g)$
as the \emph{$L^2$-Alexander torsion of $(N,\phi,\g)$}.

%=====================================
\subsection{Proof of Theorem \ref{mainthm2}}
\label{sec:proof}

For the reader's convenience we recall the statement of the proof of Theorem~\ref{mainthm2}.\\

\noindent \textbf{Theorem 1.1.}\emph{
   Let $(N,\phi,\g)$ be an admissible triple. Then  for any  representative  $\tau$ of $\tautwo(N,\phi,\g)$ 
  there exists  an $n\in \Z$ with
   $n\equiv x_N(\phi) \mbox{ mod } 2$ such that
   \[
   \tau(t^{-1})=t^n\cdot \tau(t)\mbox{ for any }t\in \R_{>0}.
   \]}

\begin{proof}
Let $e\in \eul(N)$. We write $\tau=\tautwo(N,\g,\phi,e)$.
Let $t\in \R_{>0}$. It  follows easily from the definitions that 
$(\g_t)^{\d}=\g_{t^{-1}}$.  
Using  Theorems~\ref{thm:dualitygeneral} and~\ref{thm:torsionboundary} we see
that the following equalities hold:
\[
\ba{rcl} \tau(t)=\tautwo(N,\g,\phi,e)= \tautwo(N,\g_t,e)
&=& \tautwo(N,\partial N,(\g_t)^{\d},e^{\d})\\
&=& \tautwo(N,\partial N,\g_{t^{-1}},e^{\d})\\
&=& \det_{\nng}(\g_{t^{-1}}(c_1(e)))\cdot \tautwo(N,\g_{t^{-1}},e)\\
&=& \det_{\nng}\left(t^{-\phi(c_1(e))}c_1(e)\right)\cdot \tautwo(N,\g_{t^{-1}},e)\\
&=& t^{-\phi(c_1(e))}\cdot \tautwo(N,\g_{t^{-1}},e)\\
&=& t^{-\phi(c_1(e))}\cdot \tau(t^{-1}).
\ea 
\]
Now it suffices  to prove the following claim:

\begin{claim}
For any $\phi\in H^1(N;\Z)$ we have 
\[
\phi(c_1(e))=x_N(\phi)  \mbox{ mod }2.
\]
\end{claim}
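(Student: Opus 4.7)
The plan is to prove the claim in three steps, then identify the main difficulty.

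First, I would verify that $\phi(c_1(e)) \bmod 2$ is independent of the choice of $e \in \eul(N)$. From the defining equation $e = c_1(e) \cdot K(e^\d)$, the $\HH_1(N)$-equivariance of $K$, and the observation that the duality $e \mapsto e^\d$ on a $3$-manifold is anti-equivariant with respect to the $\HH_1(N)$-action (acting by $g$ on an $i$-cell corresponds, after the $(-1)^{m-i}$ sign in the dual Euler-structure convention is taken into account, to acting by $g^{-1}$ on the dual $(m{-}i)$-cell), one derives
\[
c_1(g \cdot e) = g^2 \cdot c_1(e) \quad \text{in } \HH_1(N),
\]
so $\phi(c_1(ge)) \equiv \phi(c_1(e)) \bmod 2$ for every $g \in \HH_1(N)$, and the left-hand side of the claim depends only on $(N,\phi)$.

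Second, I would fix an oriented properly embedded surface $\Sigma \subset N$ dual to $\phi$ that realizes the Thurston norm, with $\chi_-(\Sigma) = x_N(\phi)$ and with no sphere or disk components (these can be discarded without affecting $\chi_-$). Then $\chi(\Sigma) = -\chi_-(\Sigma) \equiv x_N(\phi) \bmod 2$, so it suffices to exhibit one Euler structure $e$ for which $\phi(c_1(e)) \equiv \chi(\Sigma) \bmod 2$.

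Third, I would choose a smooth triangulation $X$ of $N$ in which both the subcomplex $Y$ for $\partial N$ and a subdivision of $\Sigma$ appear as subcomplexes. For an Euler lift $\{c_{ij}\}$ representing $e \in \eul(N) = \eul(X^\d)$, the cellular dual $\{c_{ij}^\d\} \subset X \sm Y$ represents $e^\d \in \eul(X,Y)$, and adjoining the canonical torus lifts $\{\ti s_k\}$ yields a lift for $K(e^\d) \in \eul(X)$. Identifying $\eul(X) = \eul(X^\d)$ through a common subdivision, the ratio $e / K(e^\d)$ yields an explicit $1$-cycle representative of $c_1(e)$ via the alternating-product formula for ratios of Euler lifts. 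Pairing this cycle with $\phi$ and reducing modulo $2$ collapses the alternating signs, so the computation becomes a dimension-weighted parity count of the cells of $\Sigma$, which by Euler's formula equals $\chi(\Sigma) \bmod 2$.

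The main obstacle will be the explicit combinatorial identification of the $1$-cycle representing $c_1(e)$ from the lift data, together with the verification that the boundary-torus correction coming from the $\{\ti s_k\}$ in $K(e^\d)$ does not contribute to $\phi(c_1(e)) \bmod 2$. This last point follows from the fact (implicit in Lemma~\ref{lem:torus1}) that the canonical Euler structure on each boundary torus has trivial Chern class, so the torus adjustment contributes only homology classes supported on $\partial N$ whose pairing with $\phi$ is absorbed in the modulo-$2$ parity count. Once these combinatorial details are in place, the parity collapse delivers $\phi(c_1(e)) \equiv \chi(\Sigma) \equiv x_N(\phi) \bmod 2$, completing the proof of the claim and hence of Theorem~\ref{mainthm2}.
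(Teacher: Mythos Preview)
Your first two steps are sound: the derivation $c_1(g\cdot e)=g^2\cdot c_1(e)$ from the anti-equivariance of $e\mapsto e^\d$ and the equivariance of $K$ is correct, and the reduction to a single well-chosen $e$ and a norm-minimizing surface $\Sigma$ with no disk or sphere components is fine. The paper does not bother with the independence step, since its argument applies uniformly to every $e$.

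The genuine gap is in step three. You assert that after writing $c_1(e)=e/K(e^\d)$ as an alternating product of deck transformations and pairing with $\phi$, ``the computation becomes a dimension-weighted parity count of the cells of $\Sigma$''. This is not justified. The alternating product runs over \emph{all} cells of $X$ (or $X^\d$), not just those of $\Sigma$; to make the contributions of cells away from $\Sigma$ cancel mod~$2$ you would need to construct a very specific Euler lift in which the lift of each cell and the lift of its dual sit on the same side of $\widetilde\Sigma$ except for a controlled set of cells near $\Sigma$. Arranging this, and then showing that the residual contribution is exactly $\chi(\Sigma)$ mod~$2$, is precisely the content of Turaev's analysis relating combinatorial Euler structures to $\mathrm{Spin}^c$ structures and their first Chern classes. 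Your remark that the boundary-torus correction vanishes because the canonical torus structure has trivial Chern class is plausible but also not proved here; it again rests on Turaev's computations.

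By contrast, the paper's proof is two lines of citations: it invokes \cite[Lemma~VI.1.2]{Tu02} and \cite[Section~XI.1]{Tu02} to obtain $c_1(e)\cdot S\equiv b_0(\partial S)\bmod 2$, and then uses the elementary parity $\chi_-(S)\equiv b_0(\partial S)\bmod 2$ for a surface without disk components. Your outline is in effect a proposal to reprove Turaev's lemma combinatorially; that is a legitimate project, but as written it is a sketch of where the difficulty lies rather than a proof, and the heuristic ``parity collapse to $\chi(\Sigma)$'' would need substantial work to become an argument.
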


 Let $S$ be a Thurston norm minimizing surface which is dual to
$\phi$. Since $N$ is irreducible and since $N\ne S^1\times D^2$ we can arrange that $S$
has no disk components.  Therefore we have
\[
x_N(\phi)\equiv \chi_-(S) \equiv  b_0(\partial S)\, \mod\, 2\Z.
\]  
On the other hand,  by~\cite[Lemma~VI.1.2]{Tu02}  and~\cite[Section~XI.1]{Tu02} we have that 
 \[
b_0(\partial S) \equiv c_1(e)\cdot S \,\mod\, 2\Z
\]
where $c_1(e)\cdot S$ is the intersection number of $c_1(e)\in H_1(N)=\HH_1(N)$ with
$S$. Since $S$ is dual to $\phi$, we obtain that
\[
\phi(c_1(e))\equiv c_1(e)\cdot S\equiv  b_0(\partial S)\equiv \chi_-(S)\equiv x_N(\phi) \mbox{ mod } 2\Z.
\]
This concludes the proof of the claim.

\end{proof}

%============================================
\subsection{Extending the main result to real cohomology classes}
A \emph{real admissible triple} $(N,\phi,\g)$ consists of an irreducible, orientable, compact
3--manifold $N\ne S^1\times D^2$ with empty or toroidal boundary, a non-zero class $\phi
\in H^1(N;\R)=\hom(\pi_1(N),\R)$ and a homomorphism $\g\co \pi_1(N)\to G$ such that $\phi$
factors through $\g$. Verbatim the same definition  as in Section \ref{section:def} associates to $(N,\phi,e)$ a function $\tautwo(N,\phi,e)\co \R_{>0}\to \R_{\geq 0}$ that is well-defined up to multiplication by a function of the form $t\mapsto t^r$ for some $r\in \R$. Furthermore, verbatim the same argument as in the proof of Theorem \ref{mainthm2} gives us the following result.

\begin{theorem}
   Let $(N,\phi,\g)$ be a real admissible triple. Then  for any  representative  $\tau$ of $\tautwo(N,\phi,\g)$ 
  there exists  an $r\in \R$ such that
   \[
   \tau(t^{-1})=t^r\cdot \tau(t)\mbox{ for any }t\in \R_{>0}.
   \]
\end{theorem}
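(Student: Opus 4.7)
The plan is to run the proof of Theorem~\ref{mainthm2} essentially verbatim, stopping just before the final congruence with the Thurston norm. The only step in that argument where integrality of $\phi$ plays a genuine role is the closing claim $\phi(c_1(e)) \equiv x_N(\phi) \pmod{2}$, and the real-coefficient statement places no constraint on $r$ beyond its being a real number, so this step is simply dropped.

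First I would check that $\g_t$ is still a monomial representation when $\phi$ takes real values. For any $t \in \R_{>0}$ and any real $\phi$, the scalar $t^{\phi(g)}$ is a well-defined positive real number, so $g \mapsto t^{\phi(g)}\g(g)$ lands in $\R_{>0}\cdot G \subset \gl(1,\C[G])$. The identity $(\g_t)^\d = \g_{t^{-1}}$ is then verified by the same one-line calculation as in the integer case, using that the positive real number $t^{\phi(g)}$ is fixed by complex conjugation. All of the input results cited in the proof of Theorem~\ref{mainthm2}, namely Theorem~\ref{thm:propfk}, Lemma~\ref{lem:proptautwo}, Theorem~\ref{thm:dualitygeneral}, and Theorem~\ref{thm:torsionboundary}, were stated for monomial representations into $\gl(1,\C[G])$ with no integrality hypothesis on scalars, so they apply unchanged.

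The main chain of equalities then transcribes directly. Picking $e \in \eul(N)$ and writing $\tau := \tautwo(N,\phi,\g,e)$, the duality Theorem~\ref{thm:dualitygeneral} (in dimension three, where $(-1)^{m+1}=1$) combined with Theorem~\ref{thm:torsionboundary} gives
\[
\tau(t) = \tautwo(N,\g_t,e) = \tautwo(N,\partial N,\g_{t^{-1}},e^{\d}) = \det_{\nng}\!\big(\g_{t^{-1}}(c_1(e))\big)\cdot \tau(t^{-1}).
\]
Since $\g(c_1(e)) \in G$ has $\det_{\nng}(\g(c_1(e))) = 1$ by Theorem~\ref{thm:propfk}, multiplicativity of the Fuglede-Kadison determinant gives $\det_{\nng}(\g_{t^{-1}}(c_1(e))) = t^{-\phi(c_1(e))}$, now a positive real scalar rather than an integer power of $t$. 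Rearranging yields $\tau(t^{-1}) = t^{r}\tau(t)$ with $r := \phi(c_1(e)) \in \R$.

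There is no serious obstacle: the proof depends on $\phi$ only through the positive real scalars $t^{\phi(g)}$ and $t^{-\phi(c_1(e))}$, neither of which requires $\phi$ to be integer-valued, and the Fuglede-Kadison machinery is agnostic to whether the scalars appearing in the matrices are rational, real or complex. The Thurston-norm congruence, which relied on identifying $\phi(c_1(e)) \bmod 2$ with an intersection number dual to an integer surface, is the only genuinely integer-sensitive ingredient in the proof of Theorem~\ref{mainthm2}, and it has been removed from the real-coefficient statement.
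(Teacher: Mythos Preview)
Your proposal is correct and follows exactly the route the paper intends: the paper itself offers no separate proof for the real-coefficient statement, saying only that ``verbatim the same argument as in the proof of Theorem~\ref{mainthm2}'' applies, and you have carried this out explicitly, correctly isolating the final Thurston-norm congruence as the unique step requiring integrality of $\phi$ and dropping it.
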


The only difference to Theorem \ref{mainthm2} is that for real cohomology classes $\phi\in H^1(N;\R)$ we can not relate the  exponent $r$ to the Thurston norm of $\phi$.

%\version{02.10.2014}

\end{document}